\documentclass{amsart}

\newtheorem{theorem}{Theorem}
\newtheorem{lemma}[theorem]{Lemma}
\newtheorem{corollary}[theorem]{Corollary}

\begin{document}

\title{Combinatorial Yamabe flow on hyperbolic surfaces with boundary}

\author{Ren Guo}

\address{School of Mathematics, University of Minnesota, Minneapolis, MN, 55455}

\email{guoxx170@math.umn.edu}

\subjclass[2000]{52C26, 58E30, 53C44}

\keywords{combinatorial conformal factor, combinatorial Yamabe flow, variational principle, derivative cosine law.}

\begin{abstract} This paper studies the combinatorial Yamabe flow on hyperbolic surfaces with boundary. It is proved by applying a variational principle that the length of boundary components is uniquely determined by the combinatorial conformal factor. The combinatorial Yamabe flow is a gradient flow of a concave function. The long time behavior of the flow and the geometric meaning is investigated. 
\end{abstract}

\maketitle

\section{Introduction}

\subsection{Piecewise flat metrics}
In trying to develop the analogous piecewise linear conformal geometry, Luo studied the combinatorial Yamabe problem for piecewise flat metrics on triangulated surfaces \cite{Luo04}. We summarize a part of this work in the following. Suppose $\Sigma$ is a connected orientable closed surface with a triangulation $T$ so that $V,E,F$ are sets of all vertexes, edges and triangles in $T.$ We identify a vertex of $T$ with an index, an edge of $T$ with a pair of indexes and a triangle of $T$ with a triple of indexes. This means $V=\{1,2,...n\}, E=\{ij\ |\ i,j\in V\}$ and $F=\{ijk\ |\ i,j,k\in V\},$ where $n$ is the number of vertexes.

A  piecewise flat metric on $(\Sigma, T)$ is identified with a vector indexed by the set of edges $E$. More precisely, it is an assignment to each edge a positive number such that the three numbers assigned to the three edges of a triangle satisfy the triangle inequality. Equipped with a piecewise flat metric, each triangle of $T$ can be realized as a Euclidean triangle and $\Sigma$ can be realized as a Euclidean polyhedral surface. 

Let's fix a piecewise flat metric on $(\Sigma, T)$ as $l^0\in \mathbb{R}_{>0}^{|E|}$. The assignment to the edge $ij$ is denoted by $l^0_{ij}.$ A \it combinatorial conformal factor \rm on $(\Sigma, T)$ is a vector $w=(w_1,w_2,...,w_n)\in \mathbb{R}^n$ which assigns each vertex $i\in V$ a number $w_i.$ (In \cite{Luo04}, the notation $u_i$ is used, where $u_i=e^{w_i}$.) From a combinatorial conformal factor, we obtained a new vector $l\in \mathbb{R}_{>0}^{|E|}$ as follows:
\begin{align}\label{fml:e-conformal}
l_{ij}=e^{w_i+w_j}l^0_{ij}
\end{align}
for each edge $ij\in E.$

Let $\mathcal{W}_E$ be the space of combinatorial conformal factors $w$ such that each vector $l$ corresponding to a vector $w\in \mathcal{W}_E$ is indeed a piecewise flat metric. In other words, the triangle inequality holds for each triangle under the assignment $l.$ Obviously, $\mathcal{W}_E$ depends on the initial metric $l^0.$ 

For a vector $w\in \mathcal{W}_E,$ the vector $l$ corresponding to $w$ is a piecewise flat metric. Each triangle of $T$ is realized as a Euclidean triangle under the metric $l$. At a vertex $i,$ the curvature of the metric $l$ is defined as follows. Let $\alpha^i_{jk}$ be the inner angle of the triangle $ijk\in F$ between the edges $ij$ and $ik.$ Then
$$K_i=2\pi-\sum_{ijk\in F} \alpha^i_{jk}$$
is the curvature at the vertex $i$.

This produces a map 
$$
\begin{array}{ccccccc}
\psi_E: \mathcal{W}_E & \to & \mathbb{R}^n\\
(w_1,w_2,...,w_n) & \mapsto &  (K_1,K_2,...,K_n)
\end{array}
$$
sending a combinatorial conformal factor to the curvature.

\begin{theorem}[Luo]
The map $\psi_E$ is a local homeomorphism.
\end{theorem}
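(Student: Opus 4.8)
The plan is to show that $\psi_E$ is smooth with a Jacobian whose only degeneracy is the obvious conformal scaling, and then to invoke the inverse function theorem. The organizing idea is a variational principle: I would first prove that the Jacobian $\left[\frac{\partial K_i}{\partial w_j}\right]$ is symmetric, so that $K=(K_1,\dots,K_n)$ is locally the gradient of a scalar energy; the Jacobian is then (up to sign) the Hessian of that energy, and everything reduces to understanding the sign of this Hessian.

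To obtain the symmetry I would localize to a single triangle $ijk\in F$. From \eqref{fml:e-conformal} one has $dl_{ij}=l_{ij}(dw_i+dw_j)$, and differentiating the cosine law for the Euclidean triangle with sides $l_{ij},l_{jk},l_{ik}$ gives explicit formulas for the derivatives of the inner angles $\alpha^i_{jk},\alpha^j_{ik},\alpha^k_{ij}$ in $w_i,w_j,w_k$; this is the \emph{derivative cosine law}. The key consequence is the per-triangle symmetry $\frac{\partial\alpha^i_{jk}}{\partial w_j}=\frac{\partial\alpha^j_{ik}}{\partial w_i}$, which is exactly the statement that the $1$-form $\alpha^i_{jk}\,dw_i+\alpha^j_{ik}\,dw_j+\alpha^k_{ij}\,dw_k$ on the triangle's parameter space is closed. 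Each triangle therefore carries a local potential, and summing these yields a global energy $E(w)$ with $\frac{\partial E}{\partial w_i}=2\pi-K_i$, whose Hessian is precisely $-\left[\frac{\partial K_i}{\partial w_j}\right]$; in particular $\frac{\partial K_i}{\partial w_j}=\frac{\partial K_j}{\partial w_i}$ globally.

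The heart of the argument, and the step I expect to be hardest, is the definiteness. Writing $M$ for the $3\times 3$ symmetric matrix of angle derivatives of one triangle, the two structural identities $\alpha^i_{jk}+\alpha^j_{ik}+\alpha^k_{ij}=\pi$ (Euclidean angle sum) and scale invariance force every row and column of $M$ to sum to zero. Combined with the geometric fact that enlarging the two edges at a vertex while fixing the opposite edge decreases the angle there, so $\frac{\partial\alpha^i_{jk}}{\partial w_i}<0$, one finds that $-M$ is a graph-Laplacian-type matrix: positive diagonal, nonpositive off-diagonal, zero row sums, hence positive semidefinite of rank $2$ with kernel $\mathbb{R}(1,1,1)$. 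Extracting these signs cleanly from the trigonometric formulas of the derivative cosine law is the delicate point. Summing over all triangles of the connected surface, the Hessian $\left[\frac{\partial K_i}{\partial w_j}\right]=\sum_{ijk}(-M)$ is positive semidefinite (equivalently $E$ is concave), and a connectivity argument shows its kernel is exactly the global scaling direction $\mathbb{R}\mathbf{1}$, reflecting both that $w\mapsto w+c\mathbf{1}$ only rescales the metric and that $\sum_iK_i=2\pi\chi(\Sigma)$ is pinned by Gauss--Bonnet.

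Finally I would remove the scaling degeneracy by restricting the domain to a hyperplane transverse to $\mathbb{R}\mathbf{1}$, for instance $\{\sum_iw_i=\mathrm{const}\}$, and the target to the Gauss--Bonnet hyperplane $\{\sum_iK_i=2\pi\chi(\Sigma)\}$. On these complementary hyperplanes the reduced Jacobian is symmetric and strictly positive definite, hence invertible, so the inverse function theorem makes $\psi_E$ a local diffeomorphism between them; together with the trivial behavior along the scaling direction this yields that $\psi_E$ is a local homeomorphism, proving the theorem.
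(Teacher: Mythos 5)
Your overall route is exactly the variational principle the paper attributes to Luo: derivative cosine law, symmetry of the per-triangle Jacobian, a potential whose gradient encodes the curvature map, then invertibility of the (reduced) Hessian. Your treatment of the scaling direction is also the correct reading of the statement: since $\psi_E(w+c\mathbf{1})=\psi_E(w)$ and $\sum_i K_i=2\pi\chi(\Sigma)$, the map cannot be locally injective on all of $\mathcal{W}_E$, and what Luo actually proves is the local homeomorphism between a slice transverse to $\mathbb{R}\mathbf{1}$ and the Gauss--Bonnet hyperplane, i.e.\ local determination of the metric by its curvature up to scaling; the paper's one-line statement glosses over this, and you handle it properly.

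The genuine gap is at the step you yourself call the heart of the argument: the definiteness. You claim $-M$ is ``graph-Laplacian-type: positive diagonal, nonpositive off-diagonal, zero row sums, hence positive semidefinite.'' But the derivative cosine law gives, for a triangle with angles $\alpha_1,\alpha_2,\alpha_3$,
\begin{equation*}
\frac{\partial\alpha_1}{\partial w_2}=\frac{\partial\alpha_2}{\partial w_1}=\cot\alpha_3,
\qquad
\frac{\partial\alpha_1}{\partial w_1}=-(\cot\alpha_2+\cot\alpha_3),
\end{equation*}
so the off-diagonal entries of $-M$ are $-\cot\alpha_k$, which are \emph{positive} whenever $\alpha_k$ is obtuse; nothing confines the triangles of $\mathcal{W}_E$ to be non-obtuse, so the Laplacian sign pattern fails on a large part of the domain. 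Worse, the structural facts you do establish (symmetry, zero row sums, negative diagonal) are not sufficient: for any symmetric zero-row-sum matrix one has $x^{T}(-M)x=\sum_{r<s}M_{rs}(x_r-x_s)^2$, and the choice $M_{12}=M_{13}=1$, $M_{23}=-\tfrac{9}{10}$ (diagonal forced by zero row sums, hence negative) gives $x^{T}(-M)x=-\tfrac{8}{5}$ at $x=(0,1,-1)$. What rescues the theorem is a quantitative fact special to triangle angles: the quadratic form is
\begin{equation*}
x^{T}(-M)x=\cot\alpha_3\,(x_1-x_2)^2+\cot\alpha_2\,(x_1-x_3)^2+\cot\alpha_1\,(x_2-x_3)^2,
\end{equation*}
and this is nonnegative with kernel exactly $\mathbb{R}(1,1,1)$ because $\cot\alpha_i+\cot\alpha_j>0$ (as $\alpha_i+\alpha_j<\pi$) together with the identity $\cot\alpha_1\cot\alpha_2+\cot\alpha_2\cot\alpha_3+\cot\alpha_3\cot\alpha_1=1$ (as $\alpha_1+\alpha_2+\alpha_3=\pi$); equivalently one verifies the principal minors explicitly, which is how the present paper proves its hyperbolic analogue, Lemma~\ref{thm:definite}. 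With that substituted for the diagonal-dominance argument, the rest of your proof goes through; without it, the concavity of the energy --- and hence the theorem --- is not established.
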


The theorem is proved by applying a variational principle. An local convex energy function is constructed using the derivative cosine law. And $\psi_E$ truns out to be the gradient of the energy function.

Motivated by establishing a discrete Uniformization Theorem, Luo introduced the combinatorial Yamabe flow

\begin{align}\label{fml:eflow}
\left\{
\begin{array}{ccccccc}
\frac{dw_i(t)}{dt}=-K_i(t),\\
w_i(0)=0.
\end{array}
\right.
\end{align}

\begin{corollary}[Luo]
The combinatorial Yamabe flow (\ref{fml:eflow}) is the negative gradient flow of a locally convex function in terms of $w$. And $\sum_{i=1}^n K_i^2(t)$ is decreasing in time $t$. 
\end{corollary}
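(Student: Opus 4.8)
The plan is to read off both assertions directly from the variational structure supplied by the theorem. Denote by $\Phi(w)$ the locally convex energy function whose gradient is $\psi_E$, so that $\partial \Phi/\partial w_i = K_i$ for every $i$. The flow (\ref{fml:eflow}) then reads $dw_i/dt = -K_i = -\partial\Phi/\partial w_i$, that is, $\dot w = -\nabla \Phi$; this is exactly the statement that the combinatorial Yamabe flow is the negative gradient flow of the locally convex function $\Phi$, which establishes the first assertion. The initial condition $w(0)=0$ simply starts the flow at the prescribed metric, since $l_{ij}=e^{w_i+w_j}l^0_{ij}$ reduces to $l^0_{ij}$ there, but it plays no role in the monotonicity.

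For the monotonicity of $\sum_i K_i^2$, I would differentiate along the flow. Writing $H_{ij} = \partial K_i/\partial w_j = \partial^2 \Phi/(\partial w_i\,\partial w_j)$ for the Hessian of $\Phi$, the chain rule gives $\dot K_i = \sum_j H_{ij}\dot w_j = -\sum_j H_{ij} K_j$. Hence
\[
\frac{d}{dt}\sum_{i=1}^n K_i^2 = 2\sum_{i=1}^n K_i \dot K_i = -2\sum_{i,j} K_i H_{ij} K_j = -2\,K^\top H K,
\]
where $K=(K_1,\dots,K_n)^\top$. Because $H$ is symmetric (being the Hessian of $\Phi$) and positive semidefinite (this is precisely the local convexity of $\Phi$), the quadratic form $K^\top H K$ is nonnegative, so $\frac{d}{dt}\sum_i K_i^2 \le 0$ and $\sum_i K_i^2$ is decreasing in $t$.

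The one step carrying real content is the positive semidefiniteness of $H$, which I do not prove from scratch but inherit from the theorem: the local convex energy built from the derivative cosine law is exactly what makes $H=\mathrm{Hess}\,\Phi$ positive semidefinite along the flow. The remaining care is routine: I would note that $K_i$ depends smoothly on $w$ while $w$ stays in $\mathcal{W}_E$ (the inner angles of a nondegenerate Euclidean triangle are smooth functions of its edge lengths, and the edge lengths depend smoothly on $w$), which justifies the chain rule and the differentiation under the sum. No further estimate is needed, since the sign of $K^\top H K$ already yields the conclusion.
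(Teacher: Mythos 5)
Your proposal is correct and takes essentially the same route as the paper's own proof of the analogous hyperbolic statement (Corollary \ref{thm:flow}): identify the flow as the (negative) gradient flow of the energy whose partial derivatives are the curvatures, then differentiate $\sum_i K_i^2$ along the flow via the chain rule to obtain $-2K^\top H K$ and conclude from the sign of the Hessian quadratic form. The only organizational difference is that the paper writes this quadratic form as a sum over faces of $3\times 3$ per-face Jacobian forms (its Lemma \ref{thm:definite}, which in the hyperbolic case is strictly definite), whereas you invoke the global Hessian's semidefiniteness directly; note that in the Euclidean setting your weak inequality $\le 0$ is the best possible, since scale invariance puts $(1,\dots,1)$ in the kernel of $H$, so ``decreasing'' must be read as non-increasing.
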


\subsection{Related work} 

Motivated by the application in computer graphics, Springborn, Schr\"{o}der, and Pinkall \cite{SSP08} considered this combinatorial conformal change of piecewise flat metrics (\ref{fml:e-conformal}). They found an explicit formula of the energy function. Glickenstein \cite{Gli05a,Gli05b} studied the combinatorial Yamabe flow on 3-dimensional piecewise flat manifolds relating to the ball packing metric of Cooper and Rivin \cite{CR96}. Recently Glickenstein \cite{Gli09} set the theory of combinatorial Yamabe flow of piecewise flat metric in a broader context including the theory of circle packing on surfaces. This combinatorial conformal change of metrics has appeared in physic literature \cite{RW84} and numerical analysis literature \cite{Ker96,PC98}. We were informed by Luo in 2009 that Springborn considered the combinatorial conformal change of hyperbolic metric on a triangulated closed surface. He introduced the combinatorial conformal change as 
\begin{align}\label{fml:h-conformal}
\sinh\frac{l_{ij}}{2}=e^{w_i+w_j}\sinh\frac{l^0_{ij}}{2}.
\end{align}

\subsection{Hyperbolic metrics}

In this paper we study the combinatorial Yamabe flow on hyperbolic surfaces with geodesic boundary. Let $\Sigma$ be a connected orientable compact surface with $n$ boundary components. The set of boundary components is $B=\{1,2,...n\}$ where a boundary component is identified with an index.

A colored hexagon is a hexagon with three non-pairwise adjacent edges labeled by red and the opposite edges labeled by black. Take a finite disjoint union of colored hexagons and identify all red edges in pairs by homeomorphisms. The quotient is a compact surface with non-empty boundary together with an \it ideal triangulation\rm. The faces in the ideal triangulation are quotients of the hexagons. The quotients of red edges are called the edges of the ideal triangulation while the quotients of black edges are called the boundary arcs. 

It is well-known that each connected orientable compact surface $\Sigma$ of non-empty boundary and negative Euler characteristic admits an ideal triangulation. 

Let $T$ be an ideal triangulation of $\Sigma$. Since an edge of $T$ connects two boundary components of $\Sigma$, an edge of $T$ is indentified with a pair of indexes. The set of edges of $T$ is $E=\{ij\ |\ i,j\in B\}.$ Since a face in $T$ is determined by its boundary arcs in three boundary components, a face of $T$ is indentified with a triple of indexes. The set of faces of $T$ is $F=\{ijk\ |\ i,j,k\in B\}.$ 

A hyperbolic metric on $(\Sigma,T)$ is identified with a vector indexed by the set of edges $E$. More precisely, it is an assignment to each edge a positive number. It is well-known that \cite{Bus92}, for any three positive numbers, there exists a hyperbolic right-angled hexagon the length of whose three non-pairwise adjacent edges are the three numbers. Furthermore, the hexagon is unique up to isometry. Therefore, for vector in $\mathbb{R}_{>0}^{|E|}$, each face of $F$ can be realized as a unique hyperbolic right-angled hexagon and the surface $\Sigma$ can be realized as a hyperbolic surface with geodesic boundary.

Let's fix a hyperbolic metric on $(\Sigma,T)$ as $l^0\in \mathbb{R}_{>0}^{|E|}$. The assignment to the edge $ij$ is denoted by $l^0_{ij}.$ A \it combinatorial conformal factor \rm on $(\Sigma, T)$ is a vector $w=(w_1,w_2,...,w_n)\in \mathbb{R}^n$ which assigns each boundary component $i\in B$ a number $w_i.$ From a combinatorial conformal factor, we obtained a new assignment $l\in \mathbb{R}^{|E|}$ as follows
\begin{align}\label{fml:hb-conformal}
\cosh\frac{l_{ij}}{2}=e^{w_i+w_j}\cosh\frac{l^0_{ij}}{2}
\end{align}
for any edge $ij\in E$. This definition (\ref{fml:hb-conformal}) is an analogue of Springborn's definition of combinatorial conformal change of hyperbolic metrics on triangulated closed surfaces (\ref{fml:h-conformal}). 

Denote by $\mathcal{W}$ the set of combinatorial conformal factors such that the corresponding assignment is positive, i.e.,   $l\in \mathbb{R}_{>0}^{|E|}$. Therefore, for $w\in\mathcal{W}$, we obtained a new hyperbolic metric on $(\Sigma,T)$. Each face of $T$ is a hyperbolic right-angled hexagon. Let $\theta^i_{jk}$ be the length of the boundary arc in the boundary component $i$ of the face $ijk\in F$. Then the length of the boundary component $i$ is
$$B_i=\sum_{ijk\in F} \theta^i_{jk}.$$

This produces a map 
$$
\begin{array}{ccccccc}
\psi: \mathcal{W} & \to & \mathbb{R}^n\\
(w_1,w_2,...,w_n) & \mapsto &  (B_1,B_2,...,B_n)
\end{array}
$$
sending a combinatorial conformal factor to the length of boundary components.

\begin{theorem}\label{thm:homo}
The map $\psi$ is a homeomorphism.
\end{theorem}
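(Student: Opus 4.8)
The plan is to run a variational argument in the spirit of Luo's theorem, exhibiting $\psi$ as the gradient of a strictly concave energy and then extracting injectivity, local invertibility, and the structure of the image. First I would record the shape of the domain: rewriting $l_{ij}>0$ via (\ref{fml:hb-conformal}) as $\cosh\frac{l_{ij}}{2}=e^{w_i+w_j}\cosh\frac{l^0_{ij}}{2}>1$, the admissible set is
\[
\mathcal{W}=\Big\{w\in\mathbb{R}^n:\ w_i+w_j>-\log\cosh\tfrac{l^0_{ij}}{2}\ \text{for all } ij\in E\Big\},
\]
an intersection of open half-spaces, hence an open convex (so simply connected) subset of $\mathbb{R}^n$. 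Differentiating (\ref{fml:hb-conformal}) gives the clean identity $\frac{\partial l_{ij}}{\partial w_j}=2\coth\frac{l_{ij}}{2}$, depending only on $l_{ij}$; combined with the right-angled hexagon law of cosines, which writes each arc $\theta^i_{jk}$ as a smooth function of the three edge lengths $l_{ij},l_{ik},l_{jk}$ of the face $ijk$, this shows $\psi$ is real-analytic on $\mathcal{W}$.

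Next I would establish integrability. Since $B_i=\sum_{ijk\in F}\theta^i_{jk}$ and each $\theta^i_{jk}$ depends only on $w_i,w_j,w_k$, the Jacobian $\big[\partial B_i/\partial w_j\big]$ is a sum of $3\times3$ per-face contributions. The key point is the symmetry $\frac{\partial\theta^i_{jk}}{\partial w_j}=\frac{\partial\theta^j_{ik}}{\partial w_i}$ inside each hexagon, which I would prove by a derivative cosine law for right-angled hexagons (the analogue of the device behind Luo's theorem): differentiate the hexagon cosine law, substitute $\frac{\partial l}{\partial w}=2\coth\frac{l}{2}$, and verify the mixed partials coincide. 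Symmetry makes the full Jacobian symmetric, so the $1$-form $\sum_i B_i\,dw_i$ is closed on the convex set $\mathcal{W}$ and equals $dE$ for some $E\in C^\infty(\mathcal{W})$ with $\nabla E=\psi$.

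The crux is definiteness. I would show that each per-face $3\times3$ matrix is negative definite; this is the step I expect to be the main obstacle, since it requires pinning down the signs in the derivative cosine law uniformly over all admissible right-angled hexagons. Granting it, each per-face form is negative definite on $\mathrm{span}(e_i,e_j,e_k)$ and zero on the complement, so, summing and using that every boundary component lies in some face, the total Hessian $\mathrm{Hess}\,E=D\psi$ is negative definite on $\mathbb{R}^n$. Thus $E$ is strictly concave, whence $\nabla E=\psi$ is injective (if $\nabla E(a)=\nabla E(b)$ the function $t\mapsto E\big(a+t(b-a)\big)$ would have equal derivatives at $0$ and $1$, impossible for a strictly concave function unless $a=b$), and $D\psi$ is everywhere invertible, so $\psi$ is a local diffeomorphism and in particular an open map. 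A continuous, injective, open map is a homeomorphism onto its image.

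To pin down the image I would note first that every arc length $\theta^i_{jk}>0$, so the image lies in the open orthant $\mathbb{R}^n_{>0}$, and then argue the open image is also closed in $\mathbb{R}^n_{>0}$ by analyzing $\psi$ along escapes from $\mathcal{W}$. If some edge length $l_{ij}\to0^+$, the hexagon cosine law forces $\cosh\theta^i_{jk}\to\infty$, hence $B_i\to\infty$; if instead $w$ leaves every compact subset of $\mathcal{W}$ while the edge lengths stay bounded below, some coordinate $w_i\to+\infty$ and the adjacent edges lengthen, driving the arcs $\theta^i_{jk}\to0$ and $B_i\to0$. A careful case analysis of which boundary components escape to $\pm\infty$ (the second, genuinely global, difficulty) shows that a limit of $\psi$-values can never have all coordinates in $(0,\infty)$, so the image has no boundary point inside $\mathbb{R}^n_{>0}$; since $\mathbb{R}^n_{>0}$ is connected, the image is all of $\mathbb{R}^n_{>0}$, identifying $\psi$ as a homeomorphism onto the positive orthant.
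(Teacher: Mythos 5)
Your skeleton is the same as the paper's: $\mathcal{W}$ is an open convex intersection of half-spaces; the derivative cosine law for right-angled hexagons makes each per-face Jacobian symmetric, so $\sum_i B_i\,dw_i$ is closed and integrates to an energy whose gradient is $\psi$ (this is Lemma \ref{thm:space}, Lemma \ref{thm:symmetry}, Corollary \ref{thm:function} and equation (\ref{fml:gradient})); negative definiteness then gives strict concavity, hence injectivity and local diffeomorphism; finally the image is identified by showing it is open and closed in $\mathbb{R}^n_{>0}$ and invoking connectedness. The trouble is that the two steps you explicitly defer --- ``Granting it'' for negative definiteness, and ``a careful case analysis\dots shows'' for closedness --- are not routine verifications but the entire content of the theorem. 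The paper proves the first (Lemma \ref{thm:definite}) by writing out the per-face matrix $M$ explicitly in the variables $a=\cosh l_{jk}$, $b=\cosh l_{ki}$, $c=\cosh l_{ij}$ and checking the signs of the leading principal minors and of $\det M$; nothing in your text substitutes for that computation, and without it there is no concave energy, no injectivity, and no openness of the image.

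The second gap is worse, because the one concrete claim you make in place of the case analysis is false. You assert that if all edge lengths stay bounded below and some $w_k\to+\infty$, then the edges adjacent to $k$ lengthen and the arcs at $k$ shrink. Take a face $ijk$ and let $w_i=w_k=t\to+\infty$, $w_j=-t$. All the defining inequalities of $\mathcal{W}$ hold, $l_{jk}\equiv l^0_{jk}$ and $l_{ij}\equiv l^0_{ij}$ stay constant (so an edge adjacent to $k$ does \emph{not} lengthen, and every length is bounded below), while $l_{ki}\to\infty$; the hexagon cosine law gives $\cosh\theta^k_{ij}=\bigl(\cosh l_{ij}+\cosh l_{jk}\cosh l_{ki}\bigr)/\bigl(\sinh l_{jk}\sinh l_{ki}\bigr)\to\coth l^0_{jk}>1$, so $\theta^k_{ij}$, hence $B_k$, stays bounded away from $0$ even though $w_k\to+\infty$. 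This compensating escape ($w_j\to-\infty$ against $w_k\to+\infty$) is exactly what a correct argument must rule out; it is also the delicate point behind the paper's Lemma \ref{thm:converge}, whose limit computation uses that both sums $w_j+w_k$ and $w_i+w_k$ tend to $+\infty$. The way out, absent from your proposal, is to exploit the standing assumption that every coordinate of $\lim\psi(w^{(m)})$ is finite and positive: in the scenario above the edge $ki$ blows up while the other two edges of its face stay bounded, so the opposite arc satisfies $\cosh\theta^j_{ki}\to\infty$ and $B_j\to\infty$, again a contradiction. Until the definiteness computation and this finer escape analysis are actually carried out, what you have is a plan --- the paper's plan --- rather than a proof.
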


This is a result of global rigidity while Theorem 1 is a result of local rigidity.

We also consider the combinatorial Yamabe flow in this situation

\begin{align}\label{fml:hflow}
\left\{
\begin{array}{ccccccc}
\frac{dw_i(t)}{dt}=B_i(t),\\
w_i(0)=0.
\end{array}
\right.
\end{align}

\begin{corollary}\label{thm:flow}
The combinatorial Yamabe flow (\ref{fml:hflow}) is the gradient flow of a concave function in terms of $w$. And $\sum_{i=1}^n B_i^2(t)$ is decreasing in time $t$. 
\end{corollary}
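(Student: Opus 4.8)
The plan is to realize $\psi$ as the gradient of an explicit energy function and then read off both assertions of the corollary from the concavity of that function. First I would verify that the differential $1$-form $\omega=\sum_{i=1}^n B_i\,dw_i$ is closed on $\mathcal{W}$, i.e. that the Jacobian of $\psi$ is symmetric, $\partial B_i/\partial w_j=\partial B_j/\partial w_i$ for all $i,j$. Since $B_i=\sum_{ijk\in F}\theta^i_{jk}$ is a sum over faces of boundary-arc lengths contributed by the individual right-angled hexagons, this symmetry reduces to a local identity on a single hexagon relating $\partial\theta^i_{jk}/\partial w_j$ and $\partial\theta^j_{ik}/\partial w_i$, which I expect to obtain from the derivative of the right-angled hexagon cosine law combined with the chain rule applied to (\ref{fml:hb-conformal}).

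Next I would check that $\mathcal{W}$ is convex. By (\ref{fml:hb-conformal}) the condition $l_{ij}>0$ is equivalent, via $\cosh(l_{ij}/2)>1$, to the linear inequality $w_i+w_j>-\log\cosh(l^0_{ij}/2)$, so $\mathcal{W}$ is an intersection of open half-spaces and is therefore convex, in particular simply connected. Hence the closed form $\omega$ is exact, and $\mathcal{E}(w)=\int_0^w\sum_i B_i\,dw_i$ is a well-defined smooth function on $\mathcal{W}$ with $\nabla\mathcal{E}=(B_1,\dots,B_n)=\psi(w)$. Consequently the flow (\ref{fml:hflow}), namely $\dot w_i=B_i=\partial\mathcal{E}/\partial w_i$, is by definition the gradient flow of $\mathcal{E}$.

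The crux is concavity of $\mathcal{E}$, equivalently negative (semi)definiteness of its Hessian $H=(\partial B_i/\partial w_j)$, which is exactly the Jacobian of $\psi$. This is the same matrix analysis that underlies the global rigidity of $\psi$ in Theorem \ref{thm:homo}, so I would expect to have it already in hand: a local computation shows that each hexagon contributes a negative semidefinite block to $H$, and summing over $F$ preserves the sign. This is the step I expect to be the main obstacle, since it requires a careful sign analysis of the derivatives of the boundary arc lengths with respect to the conformal factors on a single right-angled hexagon; once the sign of $H$ is established, everything else is formal.

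Finally, the monotonicity of $\sum_i B_i^2$ is a direct computation along the flow. Differentiating and using the chain rule with $\dot w_j=B_j$ gives
$$\frac{d}{dt}\sum_{i=1}^n B_i^2=2\sum_{i=1}^n B_i\frac{dB_i}{dt}=2\sum_{i,j}\frac{\partial B_i}{\partial w_j}B_iB_j=2\,B^{T}HB\le 0,$$
where the final inequality is precisely the negative semidefiniteness of $H$. Hence $\sum_i B_i^2(t)$ is non-increasing in $t$, as claimed.
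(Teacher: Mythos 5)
Your proposal follows the paper's own route almost exactly: the energy is the integral of the closed $1$-form $\sum_i B_i\,dw_i$ over the convex domain $\mathcal{W}$ (the paper builds the same function face by face, as $\overline{\mathcal{E}}=\sum_{ijk\in F}\mathcal{E}(w_i,w_j,w_k)$), closedness and concavity come from the symmetry and negative definiteness of the per-hexagon Jacobian $\partial(\theta^i_{jk},\theta^j_{ki},\theta^k_{ij})/\partial(w_i,w_j,w_k)$ --- the paper's Lemmas \ref{thm:symmetry} and \ref{thm:definite}, proved via the derivative cosine law --- and the monotonicity is the same chain-rule computation. You are also right that this matrix analysis is already in hand from the proof of Theorem \ref{thm:homo}, so deferring to it rather than redoing the hexagon computation is legitimate and mirrors what the paper itself does.

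The one genuine shortfall is strictness at the end. You hedge to ``negative (semi)definiteness'' and conclude only $\frac{d}{dt}\sum_i B_i^2\le 0$, i.e.\ non-increasing; but the corollary asserts that $\sum_i B_i^2(t)$ is decreasing. The paper gets the strict inequality by organizing $B^T H B$ as a sum over faces,
$$\sum_{ijk\in F}\,(B_i,B_j,B_k)\,\frac{\partial(\theta^i_{jk},\theta^j_{ki},\theta^k_{ij})}{\partial(w_i,w_j,w_k)}\,(B_i,B_j,B_k)^T,$$
and using that each $3\times 3$ block is negative \emph{definite} (Lemma \ref{thm:definite}, not merely semidefinite), while each vector $(B_i,B_j,B_k)$ is nonzero --- indeed every $B_i>0$, being the length of a boundary component. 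Hence every face contributes a strictly negative term and $\frac{d}{dt}\sum_i B_i^2<0$. Your argument becomes complete once you upgrade ``semidefinite'' to the definiteness that the hexagon computation actually yields and observe that $B\neq 0$ along the flow.
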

 
We investigate the long time behavior of the flow. 

\begin{theorem}\label{thm:cusp}
 The combinatorial Yamabe flow (\ref{fml:hflow}) has a solution for $t\in[0,\infty)$. Along the flow (\ref{fml:hflow}), any initial hyperbolic surface with geodesic boundary converges to a complete hyperbolic surface with cusps.
\end{theorem}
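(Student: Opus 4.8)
The plan is to split the statement into two parts: long-time existence of the flow, and identification of the limit as a cusped surface. I begin with existence. The right-hand side $w\mapsto(B_1(w),\dots,B_n(w))$ is real-analytic on $\mathcal{W}$, being assembled from the cosine law for right-angled hexagons, so the initial value problem (\ref{fml:hflow}) has a unique maximal solution on some interval $[0,\tau)$. Since each boundary length satisfies $B_i(t)>0$, every coordinate $w_i(t)$ is strictly increasing, whence $w_i(t)\ge w_i(0)=0$ and $w_i(t)+w_j(t)\ge 0$ along the flow. By (\ref{fml:hb-conformal}) this forces $\cosh\frac{l_{ij}}{2}\ge\cosh\frac{l^0_{ij}}{2}>1$, so every edge length stays bounded below by $l^0_{ij}>0$ and the trajectory never approaches the part of $\partial\mathcal{W}$ where some $l_{ij}\to 0$. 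To rule out escape to infinity in finite time I use the Corollary: $\sum_i B_i^2(t)$ is non-increasing, so $|\dot w(t)|=\big(\sum_i B_i^2(t)\big)^{1/2}\le\big(\sum_i B_i^2(0)\big)^{1/2}$, and $w(t)$ grows at most linearly. If $\tau<\infty$, bounded velocity would force $w(t)$ to converge to a finite limit $w_\ast$ as $t\to\tau$; since the trajectory stays in $\{w_i\ge 0\}\subset\mathcal{W}$ with edges bounded away from $0$, this $w_\ast$ lies in the open set $\mathcal{W}$, contradicting maximality. Hence $\tau=\infty$.

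For the asymptotics I first isolate the geometric input. The cosine law for a right-angled hexagon expresses $\cosh\theta^i_{jk}$ as a ratio whose denominator is $\sinh l_{ij}\sinh l_{ik}$, the product of the two edges adjacent to the arc $\theta^i_{jk}$; a direct estimate shows $\theta^i_{jk}\to 0$ exponentially whenever $l_{ij},l_{ik}\to\infty$, and by (\ref{fml:hb-conformal}) the latter happens as soon as $w_i\to\infty$. Consequently $B_i(t)\to 0$ once $w_i(t)\to\infty$, and it remains to show that every coordinate tends to $\infty$.

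To obtain this I work with the concave energy $\mathcal{H}$ supplied by the Corollary, normalized so that $\nabla\mathcal{H}=(B_1,\dots,B_n)$; along the flow $\frac{d}{dt}\mathcal{H}(w(t))=|B(t)|^2\ge 0$, so $\mathcal{H}$ is non-decreasing. The key step is to prove $\mathcal{H}$ is bounded above along the trajectory. Granting this, $\int_0^\infty|B(t)|^2\,dt=\lim_t\mathcal{H}(w(t))-\mathcal{H}(0)<\infty$, and since $|B(t)|^2$ is monotonically decreasing it must tend to $0$; hence every $B_i(t)\to 0$, which via Theorem \ref{thm:homo} corresponds to $w(t)$ leaving every compact set, i.e. $w_i(t)\to\infty$. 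I expect the boundedness of $\mathcal{H}$ to be the main obstacle: the cheap concavity bound $\mathcal{H}(w)\le\mathcal{H}(0)+B(0)\cdot w$ is linear and hence useless, so one must integrate $\nabla\mathcal{H}=B$ along rays and use the exponential decay of the $\theta^i_{jk}$ to see that $\int_0^\infty B(\rho u)\cdot u\,d\rho$ converges for each positive direction $u$, uniformly enough to control $\sup_{\mathcal{W}}\mathcal{H}$. An alternative, more combinatorial route avoids the energy estimate: if some $w_k$ stayed bounded then $B_k$ would be integrable on $[0,\infty)$, giving $\liminf B_k=0$, which via the hexagon law forces the edges at $k$ and hence the neighbouring $w_j$ to tend to infinity; propagating this through the connected triangulation yields a contradiction unless all $w_i\to\infty$.

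Finally I identify the limit geometrically. As all $w_i\to\infty$, every edge length $l_{ij}\to\infty$ while every boundary arc $\theta^i_{jk}\to 0$, so each right-angled hexagon of $T$ degenerates to an ideal triangle, its three vanishing arcs becoming ideal vertices. The pieces reglue to an ideal triangulation of $\Sigma$ by $|F|$ ideal triangles, producing a complete hyperbolic metric of finite area $\pi|F|$ whose ends, the pinched former boundary components, are cusps. This is precisely the asserted limit, a complete hyperbolic surface with cusps.
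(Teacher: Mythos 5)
Your existence argument is correct, and it is genuinely different from the paper's: the paper rules out finite-time blow-up via Lemma \ref{thm:converge} (if $w_i\to\infty$ in finite time then $B_i\to 0$, so the velocity is eventually small and $w_i$ stays bounded, a contradiction), whereas you bound the velocity by $|B(0)|$ using the monotonicity of $\sum_i B_i^2$ from Corollary \ref{thm:flow} (proved before Theorem \ref{thm:cusp}, so the dependency is legitimate) and observe that the closed set $\{w:\ w_i\ge 0\ \forall i\}$ sits inside the open set $\mathcal{W}$, so a finite-time limit point would lie in $\mathcal{W}$, contradicting maximality. Both routes work; yours is arguably cleaner.

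The second half, however, has a genuine gap. Your main route reduces everything to the boundedness of the energy $\mathcal{H}=\overline{\mathcal{E}}$ along the trajectory, and you leave exactly that unproven (you call it ``the main obstacle''); making the ray-integration sketch work requires a quantitative, uniform bound of the form $\theta^k_{ij}\le C e^{-2w_k}$ on $\{w\ge 0\}$, which is strictly stronger than the qualitative Lemma \ref{thm:converge} and is nowhere established. The fallback combinatorial route also fails as stated: $\liminf B_k=0$ only gives $\sinh l_{jk}\sinh l_{ki}\to\infty$ for each face $ijk$ at $k$, i.e. at least one of the two adjacent edges (hence at least one neighbouring coordinate) blows up, not all; and even if all neighbours blow up, that alone is no contradiction with $w_k$ bounded --- the contradiction needs the further estimate that $w_k$ bounded and $w_i+w_j\ge 0$ force $\cosh\theta^k_{ij}-1\ge c\,e^{-4w_k}$ with $c>0$ depending only on $l^0$, so that $B_k=\dot w_k$ stays bounded below and $w_k$ grows linearly; this step never appears. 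Your intermediate claim that $B(t)\to 0$ ``via Theorem \ref{thm:homo}'' forces every $w_i(t)\to\infty$ is also a non sequitur: leaving every compact subset of $\mathcal{W}$ does not make every coordinate blow up. The paper sidesteps all of this with a per-coordinate dichotomy you missed, which needs neither an energy bound nor the statement that all $w_i\to\infty$: either $w_i(t)\to\infty$, in which case Lemma \ref{thm:converge} gives $B_i\to 0$; or $w_i(t)$ is bounded (it is increasing), in which case $B_i=\dot w_i$ cannot stay bounded away from zero, else $w_i$ would grow linearly. Once all $B_i\to 0$, all $\theta^i_{jk}\to 0$, and the dual cosine law
\begin{equation*}
\cosh l_{ij}=\frac{\cosh\theta^k_{ij}+\cosh\theta^i_{jk}\cosh\theta^j_{ki}}{\sinh\theta^i_{jk}\sinh\theta^j_{ki}}\ \ge\ \coth\theta^i_{jk}\ \to\ \infty
\end{equation*}
yields the degeneration of each right-angled hexagon to an ideal triangle, which is the asserted convergence to a complete cusped surface.
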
 

\subsection{Variational principle}

The approach of variational principle of studying polyhedral surfaces was introduced by Colin de Verdi\'ere
\cite{CdV91} in his proof of Andreev-Thurston's circle packing theorem. Since then, many works about variational principles on polyhedral surfaces have appeared. For example, see \cite{Bra92,Riv94,Lei02,CL03,BS04,Luo06,Guo07,Spr08,GL09,Guo09} and others.

\subsection{Organization of the paper}

Theorem \ref{thm:homo} is proved in section 2. Corollary \ref{thm:flow} and Theorem \ref{thm:cusp} are proved in section 3.

\section{Homeomorphism}

\subsection{Space of combinatorial conformal factors}

Let $l^0\in \mathbb{R}_{>0}^{|E|}$ be a fixed hyperbolic metric on $(\Sigma,T)$. We investigate the space of combinatorial conformal factor such that the corresponding new assignment is a hyperbolic metric. For a face $ijk\in F,$ denote by $\mathcal{W}^{ijk}$ the space of vectors $(w_i,w_j,w_k)$ such that $l_{jk},l_{ki}$ and $l_{ij}$ are positive.

\begin{lemma} \label{thm:space}
 $\mathcal{W}^{ijk}$ is a convex polytope.
\end{lemma}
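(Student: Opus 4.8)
The plan is to show that each of the three positivity constraints defining $\mathcal{W}^{ijk}$ is a \emph{linear} inequality in the variables $(w_i,w_j,w_k)$, so that $\mathcal{W}^{ijk}$ is an intersection of half-spaces. The whole argument hinges on the fact that, after taking logarithms, the condition $l_{ij}>0$ becomes affine in the conformal factors.

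First I would isolate what the constraints actually are. By definition $(w_i,w_j,w_k)\in\mathcal{W}^{ijk}$ precisely when $l_{jk},l_{ki},l_{ij}$ are all positive. The key point—and the main structural difference from the Euclidean case, where the triangle inequality intervenes—is that \emph{no} additional inequality among the three lengths is required: the excerpt records that for any three positive numbers there exists a (unique up to isometry) hyperbolic right-angled hexagon whose three non-pairwise-adjacent edges have those lengths. Hence the only constraints cutting out $\mathcal{W}^{ijk}$ are the three positivity conditions $l_{ij}>0$, $l_{jk}>0$, $l_{ki}>0$, one per edge of the face.

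Next I would linearize each constraint. Since $\cosh$ is strictly increasing on $[0,\infty)$ with $\cosh 0=1$, the requirement that $l_{ij}$ be a positive real number is equivalent to $\cosh\frac{l_{ij}}{2}>1$. Substituting the conformal change formula (\ref{fml:hb-conformal}) yields $e^{w_i+w_j}\cosh\frac{l^0_{ij}}{2}>1$, and applying the (monotone) logarithm converts this into
\[
w_i+w_j > -\log\cosh\frac{l^0_{ij}}{2},
\]
where the right-hand side is a constant fixed by the initial metric $l^0$. Thus $l_{ij}>0$ is exactly one affine half-space condition on $(w_i,w_j,w_k)$.

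Assembling the three such inequalities, one for each edge $ij$, $jk$, $ki$, exhibits $\mathcal{W}^{ijk}$ as the intersection of three half-spaces defined by linear inequalities in $\mathbb{R}^3$, which is a convex polytope. I do not expect a genuine obstacle in this lemma; the only step requiring care is confirming that positivity of the three hexagon edge lengths is the \emph{sole} constraint, which is exactly what the existence-and-uniqueness statement for right-angled hexagons guarantees.
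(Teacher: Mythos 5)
Your proposal is correct and follows essentially the same route as the paper: each positivity condition $l_{ij}>0$ is converted via the conformal change formula into the linear inequality $w_i+w_j>-\ln\cosh\frac{l^0_{ij}}{2}$, so $\mathcal{W}^{ijk}$ is an intersection of three half-spaces. Your additional remark that the hexagon existence-and-uniqueness result is what eliminates any triangle-inequality-type constraint is a point the paper leaves implicit, but the argument is the same.
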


\begin{proof}
By definition (\ref{fml:hb-conformal}) $\cosh\frac{l_{ij}}{2}=e^{w_i+w_j}\cosh\frac{l^0_{ij}}{2}$. The only requirement is $l_{ij}>0.$ Hence 
$$w_i+w_j>-\ln \cosh\frac{l^0_{ij}}{2}.$$
Similar inequalities hold for $w_j+w_k$ and $w_k+w_i$.
Therefore $\mathcal{W}^{ijk}$ is the intersection of three half space. 
\end{proof}

\begin{corollary} \label{thm:wspace}
The space $\mathcal{W}$ is a convex polytope.
\end{corollary}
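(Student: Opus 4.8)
The plan is to deduce this directly from Lemma~\ref{thm:space} together with the observation that the defining condition for $\mathcal{W}$ decouples over the edges of $T$. By definition $\mathcal{W}$ consists of those $w\in\mathbb{R}^n$ for which the corresponding assignment $l$ lies in $\mathbb{R}_{>0}^{|E|}$, that is, for which $l_{ij}>0$ holds simultaneously for every edge $ij\in E$.

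First I would record, exactly as in the proof of Lemma~\ref{thm:space}, that the single inequality $l_{ij}>0$ is equivalent to the linear inequality
$$w_i+w_j>-\ln\cosh\frac{l^0_{ij}}{2},$$
since $l_{ij}>0$ is the same as $\cosh\frac{l_{ij}}{2}>1$, and (\ref{fml:hb-conformal}) then gives $e^{w_i+w_j}>1/\cosh\frac{l^0_{ij}}{2}$. The crucial point is that this constraint involves only the two coordinates $w_i,w_j$ and does not depend on which face(s) of $T$ contain the edge $ij$.

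Next I would write $\mathcal{W}$ as the intersection
$$\mathcal{W}=\bigcap_{ij\in E}\left\{w\in\mathbb{R}^n \ :\ w_i+w_j>-\ln\cosh\frac{l^0_{ij}}{2}\right\},$$
a finite intersection of open half-spaces of $\mathbb{R}^n$, one for each edge. Since a finite intersection of half-spaces is by the convention of Lemma~\ref{thm:space} a convex polytope, the claim follows. Equivalently, one may package this through the lemma: $\mathcal{W}$ is the intersection over all faces $ijk\in F$ of the preimages of the polytopes $\mathcal{W}^{ijk}$ under the coordinate projections $\mathbb{R}^n\to\mathbb{R}^3$, $w\mapsto(w_i,w_j,w_k)$, and a finite intersection of convex polytopes is again a convex polytope.

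There is no genuine obstacle here; the only thing to be careful about is that the conditions defining $\mathcal{W}$ are naturally indexed by edges (each giving exactly one half-space) rather than by faces, so that a shared edge is not over-counted. Since repeating a half-space in the intersection changes nothing, even the face-indexed description yields the same convex polytope, and in either formulation $\mathcal{W}$ is nonempty because $w=0$ recovers the initial metric $l^0\in\mathbb{R}_{>0}^{|E|}$.
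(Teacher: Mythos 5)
Your proof is correct and takes essentially the same route as the paper, whose entire argument is the one-line identity $\mathcal{W}=\cap_{ijk\in F}\mathcal{W}^{ijk}$ resting on Lemma~\ref{thm:space}; your edge-indexed intersection of half-spaces is just an unpacked (and slightly more careful) version of that, including the observation about preimages under coordinate projections that the paper leaves implicit.
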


\begin{proof}
$\mathcal{W}=\cap_{ijk\in F} \mathcal{W}^{ijk}.$
\end{proof}

\subsection{Energy function}

Let's focus on one face $ijk\in F.$ When $(w_i,w_j,w_k)\in \mathcal{W}^{ijk},$ from (\ref{fml:hb-conformal}), we obtain a hyperbolic right-angled hexagon whose non-pairwise adjacent edges have length $l_{jk}, l_{ki}$ and $l_{ij}$. Let $\theta^i_{jk}, \theta^j_{ki}$ and $\theta^k_{ij}$ be the length of the hyperbolic arcs opposite to the edges $jk, ki$ and $ij$ of this hexagon. Therefore $\theta^i_{jk}, \theta^j_{ki}$ and $\theta^k_{ij}$ are functions of $w_i,w_j,w_k$.

\begin{lemma}\label{thm:symmetry} 
The Jacobian matrix of functions $\theta^i_{jk}, \theta^j_{ki}, \theta^k_{ij}$ in terms of $w_i,w_j,w_k$ is symmetric.
\end{lemma}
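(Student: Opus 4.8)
The plan is to compute the Jacobian by the chain rule, routing the dependence of the three arc lengths on $(w_i,w_j,w_k)$ through the three edge lengths $(l_{jk},l_{ki},l_{ij})$. Since a square matrix is symmetric exactly when its off-diagonal entries agree, and since the entire construction is invariant under permuting the labels $i,j,k$, it suffices to prove a single identity, say
\[
\frac{\partial \theta^i_{jk}}{\partial w_j}=\frac{\partial \theta^j_{ki}}{\partial w_i},
\]
the two remaining pairs following by relabeling. Two ingredients will feed the chain rule. First, differentiating the conformal relation (\ref{fml:hb-conformal}) gives the clean formula $\frac{\partial l_{ij}}{\partial w_i}=\frac{\partial l_{ij}}{\partial w_j}=2\coth\frac{l_{ij}}{2}$, together with the fact that $l_{ij}$ is independent of $w_k$; hence each $w_m$ influences only the two edges incident to the boundary component $m$. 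Second, I need the derivatives $\frac{\partial \theta}{\partial l}$, that is, the derivative cosine law for the hyperbolic right-angled hexagon.

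For that second ingredient I would start from the right-angled hexagon cosine law $\cosh\theta^i_{jk}=(\cosh l_{ki}\cosh l_{ij}+\cosh l_{jk})/(\sinh l_{ki}\sinh l_{ij})$ and differentiate. A short computation yields $\frac{\partial \theta^i_{jk}}{\partial l_{jk}}=\sinh l_{jk}/(\sinh\theta^i_{jk}\sinh l_{ki}\sinh l_{ij})$ and $\frac{\partial \theta^i_{jk}}{\partial l_{ij}}=-\cosh\theta^j_{ki}\sinh l_{jk}/(\sinh\theta^i_{jk}\sinh l_{ki}\sinh l_{ij})$, where the cosine law is used again to put the off-diagonal numerator in this form. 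The crucial simplification comes from the hexagon sine law $\sinh l_{jk}/\sinh\theta^i_{jk}=\sinh l_{ki}/\sinh\theta^j_{ki}=\sinh l_{ij}/\sinh\theta^k_{ij}$: writing $S$ for this common ratio collapses the row factor to $S/(\sinh l_{ki}\sinh l_{ij})$, and this uniformization across rows is what makes the symmetry plausible in the first place.

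Assembling the pieces, $\frac{\partial \theta^i_{jk}}{\partial w_j}$ is the sum of two chain-rule terms, through $l_{jk}$ and $l_{ij}$, and likewise $\frac{\partial \theta^j_{ki}}{\partial w_i}$ goes through $l_{ki}$ and $l_{ij}$. After inserting the sine-law form of the coefficients and clearing the common denominators, the desired equality reduces to the single trigonometric identity
\[
\cosh l_{jk}-\cosh l_{ki}=\coth\tfrac{l_{ij}}{2}\bigl(\sinh l_{jk}\cosh\theta^j_{ki}-\sinh l_{ki}\cosh\theta^i_{jk}\bigr).
\]
Verifying this is the only nontrivial point, and is where I expect the main obstacle to lie. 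I would close it by using $\sinh l\,\coth\frac{l}{2}=\cosh l+1$ to dispatch the diagonal terms, then rewriting $\sinh l_{jk}\cosh\theta^j_{ki}$ and $\sinh l_{ki}\cosh\theta^i_{jk}$ via the cosine law; their difference factors as $(\cosh l_{jk}-\cosh l_{ki})(\cosh l_{ij}-1)/\sinh l_{ij}$, and since $\coth\frac{l_{ij}}{2}\cdot(\cosh l_{ij}-1)/\sinh l_{ij}=\coth\frac{l_{ij}}{2}\tanh\frac{l_{ij}}{2}=1$, the right-hand side collapses to exactly $\cosh l_{jk}-\cosh l_{ki}$, establishing the identity and hence the symmetry of the Jacobian.
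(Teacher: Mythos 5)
Your proof is correct: every formula in it checks out --- the derivative $\partial l_{ij}/\partial w_i=2\coth\frac{l_{ij}}{2}$ of the conformal relation (\ref{fml:hb-conformal}), the two derivative-cosine-law coefficients, the sine-law uniformization, and the final factorization $\sinh l_{jk}\cosh\theta^j_{ki}-\sinh l_{ki}\cosh\theta^i_{jk}=(\cosh l_{jk}-\cosh l_{ki})(\cosh l_{ij}-1)/\sinh l_{ij}$, which together with $\coth\frac{l_{ij}}{2}\tanh\frac{l_{ij}}{2}=1$ closes the argument. The skeleton (chain rule through the edge lengths, differentiation of the conformal relation, derivative cosine law) is the same as the paper's, but your verification of symmetry is genuinely different. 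The paper never isolates a single off-diagonal pair: it writes the full Jacobian as a scalar multiple of a product $M$ of four explicit matrices, substitutes the cosine law with $a=\cosh l_{jk}$, $b=\cosh l_{ki}$, $c=\cosh l_{ij}$, and computes all nine entries of $M$, from which symmetry is read off by inspection. You instead exploit the equivariance of the whole construction under permutations of the labels $i,j,k$ to reduce to the single identity $\partial\theta^i_{jk}/\partial w_j=\partial\theta^j_{ki}/\partial w_i$, invoke the right-angled hexagon sine law (which the paper's proof never uses) to collapse the row factors, and finish with a one-line trigonometric identity. Your route is leaner and more conceptual if symmetry is the only goal; the trade-off is that it produces no explicit formula for the matrix, whereas the paper's heavier computation pays for itself immediately: the explicit entries of $M$ in terms of $a,b,c$ are precisely what the proof of Lemma \ref{thm:definite} uses to verify positive definiteness via principal minors, so under your approach that subsequent lemma would require a separate computation.
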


\begin{proof}
The cosine law for hyperbolic right-angled hexagon induces the derivative cosine law:
\begin{multline*}
\left(
\begin{array}{ccc}
d\theta^i_{jk}\\
d\theta^j_{ki}\\
d\theta^k_{ij}
\end{array}\right)
=\frac{-1}{\sinh \theta^k_{ij} \sinh l_{ki}\sinh l_{jk}} \left(
\begin{array}{ccc}
\sinh l_{jk}&0&0 \\
0&\sinh l_{ki}&0 \\
0&0&\sinh l_{ij}
\end{array}
\right)\\
 \left(
\begin{array}{ccc}
-1                &   \cosh \theta^k_{ij} &\cosh \theta^j_{ki} \\
\cosh \theta^k_{ij}&  -1                   &\cosh \theta^i_{jk} \\
\cosh \theta^j_{ki}&  \cosh \theta^i_{jk} &-1
\end{array}
\right) \left(
\begin{array}{ccc}
dl_{jk} \\
dl_{ki} \\
dl_{ij}
\end{array}\right).
\end{multline*}

By differentiating the two sides of the equation (\ref{fml:hb-conformal}), $\cosh\frac{l_{ij}}{2}=e^{w_i+w_j}\cosh\frac{l^0_{ij}}{2}$, we obtain
$$dl_{ij}=\frac{2\sinh l_{ij}}{\cosh l_{ij}-1}(d w_i+d w_j).$$ 
Similar formulas hold for $d l_{jk}$ and $d l_{ki}$. Then we have 

\begin{multline*}
\left(
\begin{array}{ccc}
d\theta^i_{jk}\\
d\theta^j_{ki}\\
d\theta^k_{ij}
\end{array}\right)
=\frac{-2}{\sinh \theta^k_{ij} \sinh l_{ki}\sinh l_{jk}} \left(
\begin{array}{ccc}
\sinh l_{jk} & 0          &0 \\
0            &\sinh l_{ki}&0 \\
0            &0           &\sinh l_{ij}
\end{array}
\right)\\
 \left(
\begin{array}{ccc}
-1                &   \cosh \theta^k_{ij} &\cosh \theta^j_{ki} \\
\cosh \theta^k_{ij}&  -1                   &\cosh \theta^i_{jk} \\
\cosh \theta^j_{ki}&  \cosh \theta^i_{jk} &-1
\end{array}
\right) 
\left(
\begin{array}{ccc}
\frac{\sinh l_{jk}}{\cosh l_{jk}-1} & 0                                  &0 \\
0                                    &\frac{\sinh l_{ki}}{\cosh l_{ki}-1}&0 \\
0                                    &0                                   &\frac{\sinh l_{ij}}{\cosh l_{ij}-1}
\end{array}
\right)\\
\left(
\begin{array}{ccc}
0 & 1 &1 \\
1 & 0 &1 \\
1 & 1 &0
\end{array}
\right)
\left(
\begin{array}{ccc}
d w_i \\
d w_j \\
d w_k
\end{array}\right).
\end{multline*}
 
For simplicity of the notations, the above formula is written as 
$$
\left(
\begin{array}{ccc}
d\theta^i_{jk}\\
d\theta^j_{ki}\\
d\theta^k_{ij}
\end{array}\right)
=\frac{-2}{\sinh \theta^k_{ij} \sinh l_{ki}\sinh l_{jk}}
M
\left(
\begin{array}{ccc}
d w_i \\
d w_j \\
d w_k
\end{array}\right)
$$
where $M$ is a product of four matrixes. To prove the lemma, it is enough to show that the matrix $M$ is symmetric. 

Represent $\cosh\theta^i_{jk}, \cosh\theta^j_{ki}, \cosh\theta^k_{ij}$ as functions of $\cosh l_{jk}, \cosh l_{ki}, \cosh l_{ij}$ using the cosine law. For simplicity of the notations, let $a:=\cosh l_{jk},b:=\cosh l_{ki},c:=\cosh l_{ij}$, Then we have
$$
M=
\left(
\begin{matrix}
\displaystyle \frac{c+ab}{b-1} + \frac{b+ac}{c-1}  
& \displaystyle \frac{a+b-c+1}{c-1}                
& \displaystyle \frac{a+c-b+1}{b-1}  \\
\displaystyle\frac{a+b-c+1}{c-1}                  
&\displaystyle \frac{c+ab}{a-1} + \frac{a+bc}{c-1} 
&\displaystyle \frac{b+c-a+1}{a-1} \\
\displaystyle \frac{a+c-b+1}{b-1}                  
&\displaystyle \frac{b+c-a+1}{a-1}                 
&\displaystyle \frac{b+ac}{a-1} + \frac{a+bc}{b-1}
\end{matrix}
\right).
$$
\end{proof}

\begin{lemma}\label{thm:definite} 
The Jacobian matrix of functions $\theta^i_{jk}, \theta^j_{ki}, \theta^k_{ij}$ in terms of $w_i,w_j,w_k$ is negative definite.
\end{lemma}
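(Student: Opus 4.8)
The starting point is the conclusion of the preceding lemma: the Jacobian equals
$\frac{-2}{\sinh\theta^k_{ij}\sinh l_{ki}\sinh l_{jk}}\,M$, a scalar multiple of the symmetric matrix $M$ whose entries are written out explicitly in terms of $a=\cosh l_{jk}$, $b=\cosh l_{ki}$, $c=\cosh l_{ij}$. Since the $\sinh$ of a positive length is positive, that scalar is negative, and a negative scalar times a symmetric matrix is negative definite precisely when the matrix is positive definite. So the plan is to prove that $M$ is positive definite. Because the entries of $M$ depend only on $(a,b,c)$, and because any triple of positive lengths is realized by a right-angled hexagon, it suffices to show that $M=M(a,b,c)$ is positive definite for every $(a,b,c)\in(1,\infty)^3$.

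First I would compute $\det M$ using the factorization $M=D_1AD_2P$ recorded in the proof of Lemma \ref{thm:symmetry}, where $D_1=\mathrm{diag}(\sinh l_{jk},\sinh l_{ki},\sinh l_{ij})$, $D_2$ is the diagonal matrix with entries $\frac{\sinh l_{jk}}{\cosh l_{jk}-1}$, etc., $P$ is the $0$--$1$ matrix with zero diagonal, and $A$ is the symmetric matrix with $-1$ on the diagonal and the various $\cosh\theta^\bullet_\bullet$ off the diagonal. Then $\det M=\det D_1\cdot\det A\cdot\det D_2\cdot\det P$, in which $\det D_1>0$ and $\det D_2>0$ are immediate, $\det P=2>0$, and a direct expansion gives $\det A=x^2+y^2+z^2+2xyz-1$ with $x=\cosh\theta^i_{jk}$, $y=\cosh\theta^j_{ki}$, $z=\cosh\theta^k_{ij}$. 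Since each boundary arc has positive length, $x,y,z>1$, whence $\det A>4>0$ and therefore $\det M>0$ throughout $(1,\infty)^3$.

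The remaining step is to upgrade the sign of the determinant to full positive definiteness, and here I would use a connectedness argument rather than grind through the remaining principal minors. The domain $(1,\infty)^3$ is convex, hence connected, and $M$ is a continuous symmetric-matrix-valued function on it; since $\det M$ never vanishes, none of the eigenvalues of $M$ is ever zero, so the number of negative eigenvalues is a locally constant integer function and therefore constant on the domain. It then suffices to verify positive definiteness at one convenient point. Taking $a=b=c=t>1$, the cosine law gives $x=y=z=\frac{t}{t-1}$, and the entries of $M$ collapse to $M=\frac{t+1}{t-1}\bigl((2t-1)I+\mathbf{1}\mathbf{1}^{\!\top}\bigr)$ with $\mathbf{1}=(1,1,1)^{\!\top}$; its eigenvalues are $\frac{t+1}{t-1}(2t+2)$ and $\frac{t+1}{t-1}(2t-1)$ (twice), all positive for $t>1$. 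Hence $M$ is positive definite everywhere, and the Jacobian is negative definite.

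I expect the only real obstacle to be the bookkeeping in $\det A$ and the evaluation at the symmetric point; both are short, and the connectedness argument deliberately sidesteps the messier $2\times2$ leading minor that a direct application of Sylvester's criterion would otherwise force.
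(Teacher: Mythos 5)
Your proof is correct, and it takes a genuinely different route from the paper's at the decisive step. The shared part is the reduction to positive definiteness of $M$ and the determinant computation: like the paper, you obtain $\det M>0$ from the factorization $M=D_1AD_2P$, and your expansion $\det A=x^2+y^2+z^2+2xyz-1>4$ is the same quantity the paper writes as $(\sinh \theta^i_{jk} \sinh \theta^j_{ki} \sinh l_{ij})^2$ (the two agree by the cosine law), so both treatments of the $3\times 3$ minor are sound. The divergence is in how positivity of the determinant gets upgraded to definiteness: the paper runs Sylvester's criterion, checking $M_{11}>0$ and deducing $M_{11}M_{22}-M_{12}M_{21}>0$ from $M_{11}-M_{12}>0$ and $M_{22}-M_{21}>0$, whereas you keep only the determinant and invoke constancy of the signature over the connected parameter space $(1,\infty)^3$, anchored at the symmetric point $a=b=c=t$ where the eigenvalues are explicit. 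Your route buys robustness: the paper's $2\times2$ step is the delicate one, since $M_{12}=\frac{a+b-c+1}{c-1}$ can be negative (take $l_{jk}=l_{ki}=1$, $l_{ij}=3$), and for negative $M_{12}$ the two displayed inequalities do not by themselves force $M_{11}M_{22}-M_{12}^2>0$; one must additionally observe that $M_{11}+M_{12}>0$ (which is true, but not stated in the paper). Your connectedness argument sidesteps that sign analysis entirely. What the paper's pointwise algebraic approach buys in exchange is independence from the two extra facts your argument relies on, namely that every $(a,b,c)\in(1,\infty)^3$ is realized by a right-angled hexagon and that eigenvalues of a symmetric matrix vary continuously --- both standard, and the first is available in the paper via the realization theorem quoted from Buser. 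Both proofs are valid; yours is arguably the more airtight as written.
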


\begin{proof} 
We only need to show that the matrix $M$ is positive definite. Let $M_{rs}$ be the entry of $M$ at $r-$th row and $s-$th column. First, $M_{11}>0.$ Second, 
\begin{align*}
M_{11}-M_{12}= \frac{c+ab}{b-1} + a+1 >0,\\
M_{22}-M_{21}= \frac{c+ab}{a-1} + b+1 >0.
\end{align*}
Then $M_{11}M_{22}-M_{12}M_{21}>0.$ Third,
\begin{multline*}
\det M=\sinh l_{jk} \sinh l_{ki}\sinh l_{ij}\cdot(\sinh \theta^i_{jk} \sinh \theta^j_{ki} \sinh l_{ij})^2 \\
\cdot \frac{\sinh l_{jk}}{\cosh l_{jk}-1} \frac{\sinh l_{ki}}{\cosh l_{ki}-1} \frac{\sinh l_{ij}}{\cosh l_{ij}-1}
\cdot 2>0. 
\end{multline*}
\end{proof}

\begin{corollary}\label{thm:function}
The differential 1-form $\theta^i_{jk} d w_i+ \theta^j_{ki} d w_j + \theta^k_{ij} d w_k$ is closed on $\mathcal{W}^{ijk}$. For any $c\in \mathcal{W}^{ijk},$ the integral 
$$\mathcal{E}(w_i,w_j,w_k)=\int_c^{(w_i,w_j,w_k)}(\theta^i_{jk} d w_i+ \theta^j_{ki} d w_j + \theta^k_{ij} d w_k)$$
is a strictly concave function on $\mathcal{W}^{ijk}$ satisfying 

\begin{align*}
\frac{\partial \mathcal{E}}{\partial w_i} =\theta^i_{jk}, 
\frac{\partial \mathcal{E}}{\partial w_j} =\theta^j_{ki}, 
\frac{\partial \mathcal{E}}{\partial w_k} =\theta^k_{ij}.
\end{align*}
\end{corollary}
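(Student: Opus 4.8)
The plan is to recognize that all the analytic substance of this corollary has already been established in the two preceding lemmas, so that what remains is the standard bookkeeping of a variational principle: a closed one-form on a simply connected domain integrates to a potential function whose Hessian is the Jacobian of the one-form's coefficients. I would therefore assemble the statement from Lemmas~\ref{thm:space}, \ref{thm:symmetry} and~\ref{thm:definite} rather than compute anything new.

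First I would verify closedness. Writing $\omega = \theta^i_{jk}\, dw_i + \theta^j_{ki}\, dw_j + \theta^k_{ij}\, dw_k$, the condition $d\omega = 0$ is equivalent to the equality of cross partials $\partial \theta^i_{jk}/\partial w_j = \partial \theta^j_{ki}/\partial w_i$ together with the two analogous relations. But these are precisely the statements that the off-diagonal entries of the Jacobian matrix of $(\theta^i_{jk}, \theta^j_{ki}, \theta^k_{ij})$ in terms of $(w_i, w_j, w_k)$ coincide, which is exactly the symmetry asserted in Lemma~\ref{thm:symmetry}. Hence $\omega$ is closed on $\mathcal{W}^{ijk}$.

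Next I would invoke Lemma~\ref{thm:space}: since $\mathcal{W}^{ijk}$ is a convex polytope, it is convex, hence contractible and in particular simply connected. By the Poincar\'e lemma a closed one-form on such a domain is exact, so the line integral defining $\mathcal{E}$ is independent of the path from $c$ to $(w_i, w_j, w_k)$ and gives a well-defined smooth function. Differentiating along the coordinate directions (taking straight-line paths, which convexity guarantees remain inside $\mathcal{W}^{ijk}$) recovers $\partial \mathcal{E}/\partial w_i = \theta^i_{jk}$, $\partial \mathcal{E}/\partial w_j = \theta^j_{ki}$, and $\partial \mathcal{E}/\partial w_k = \theta^k_{ij}$. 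Finally, for concavity I would note that the Hessian of $\mathcal{E}$ is the matrix of its second partials, which by the previous step equals the Jacobian of $(\theta^i_{jk}, \theta^j_{ki}, \theta^k_{ij})$ with respect to $(w_i, w_j, w_k)$; Lemma~\ref{thm:definite} shows this Jacobian is negative definite at every point, and a function with everywhere-negative-definite Hessian on a convex set is strictly concave.

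As for the main obstacle, there is essentially none left at this stage: the genuine work, namely the symmetry and negative definiteness of the Jacobian, was already carried out via the derivative cosine law in Lemmas~\ref{thm:symmetry} and~\ref{thm:definite}. The only points deserving care are logical rather than computational, namely ensuring the domain is simply connected so that closedness upgrades to exactness (supplied by convexity in Lemma~\ref{thm:space}), and observing that \emph{strict} concavity requires the Hessian to be negative definite \emph{everywhere} on $\mathcal{W}^{ijk}$, not merely negative semidefinite at a single point---which is precisely what Lemma~\ref{thm:definite} delivers.
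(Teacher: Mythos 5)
Your proposal is correct and follows exactly the paper's own argument: closedness from the symmetry of the Jacobian (Lemma~\ref{thm:symmetry}), well-definedness of the integral from the convexity, hence simple connectivity, of $\mathcal{W}^{ijk}$ (Lemma~\ref{thm:space}), and strict concavity from the negative definiteness of the Hessian (Lemma~\ref{thm:definite}). The only difference is that you spell out the routine details (Poincar\'e lemma, identification of Hessian with Jacobian) that the paper leaves implicit.
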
 

\begin{proof} The differential 1-form is closed due to Lemma \ref{thm:symmetry}. Since $\mathcal{W}^{ijk}$ is connected and simply connected due to Lemma \ref{thm:space}, then the function $\mathcal{E}(w_i,w_j,w_k)$ is well defined, i.e., independent the path of integration. By Lemma \ref{thm:definite}, the Hessian matrix of $\mathcal{E}(w_i,w_j,w_k)$ is negative definite. 
\end{proof}

\subsection{Homeomorphism}
In this subsection we prove Theorem \ref{thm:homo}.The following two lemmas are needed.

The first one is well-known in analysis.

\begin{lemma}\label{thm:convex} 
Suppose $X$ is an open convex set in $\mathbb{R}^N$ and $f: X\to \mathbb{R}$ a smooth function.
If the Hessian matrix of $f$ is positive definite for all $x \in X$, then the
gradient $\nabla f:X\to\mathbb{R}^N$ is a smooth embedding.
\end{lemma}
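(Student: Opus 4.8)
The plan is to verify separately the three properties that constitute a smooth embedding: that $\nabla f$ is a smooth immersion of full rank, that it is globally injective, and that it is a homeomorphism onto its image. Smoothness is immediate, since $f$ is smooth and hence the component functions $\partial f/\partial x_1,\dots,\partial f/\partial x_N$ of $\nabla f$ are smooth. The Jacobian matrix of $\nabla f$ at a point $x$ is exactly the Hessian of $f$ at $x$, which is assumed positive definite and therefore invertible at every $x\in X$. By the inverse function theorem, $\nabla f$ is thus a local diffeomorphism; in particular it is an immersion of full rank and, crucially, an open map.

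The main content is the global injectivity, and this is where the convexity of $X$ is essential, since local invertibility alone never forces a map to be globally one-to-one. I would show that the gradient is strictly monotone. Given distinct points $x,y\in X$, convexity guarantees that the segment $x+t(y-x)$, $t\in[0,1]$, lies in $X$. Setting $g(t)=f(x+t(y-x))$, the chain rule gives $g'(t)=\langle \nabla f(x+t(y-x)),\,y-x\rangle$ and $g''(t)=(y-x)^{\top}H(x+t(y-x))(y-x)$, where $H$ denotes the Hessian. Integrating,
\[
\langle \nabla f(y)-\nabla f(x),\,y-x\rangle = g'(1)-g'(0)=\int_0^1 g''(t)\,dt>0,
\]
because the integrand is strictly positive by positive definiteness. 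Hence $\nabla f(y)\neq\nabla f(x)$, which proves injectivity.

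Finally I would assemble the embedding property from these pieces. Being a local diffeomorphism, $\nabla f$ is an open map, so it carries open subsets of $X$ to open subsets of $\mathbb{R}^N$; being also continuous and injective, it is therefore a homeomorphism onto its image $\nabla f(X)$ with the subspace topology. Since $\nabla f$ is moreover a local diffeomorphism, the inverse map on the image is smooth, and so $\nabla f$ is a smooth embedding. The only step requiring genuine care is the global injectivity: the role of convexity is precisely that it lets us join any two points of $X$ by a segment on which the Hessian can be integrated, and I expect this to be the conceptual heart of the argument, the remaining steps being routine applications of the inverse function theorem.
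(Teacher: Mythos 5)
Your proof is correct: the paper itself gives no argument for this lemma, stating it only as ``well-known in analysis,'' and what you have written is precisely the standard proof that the paper implicitly relies on. The monotonicity inequality $\langle \nabla f(y)-\nabla f(x),\,y-x\rangle=\int_0^1 (y-x)^{\top}H(x+t(y-x))(y-x)\,dt>0$ obtained by integrating the Hessian along the segment (which convexity keeps inside $X$) is indeed the heart of the matter, and your assembly of injectivity, openness from the inverse function theorem, and smoothness of the inverse into the embedding conclusion is complete.
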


\begin{lemma} \label{thm:converge}
For a family of combinatorial conformal factor $w^{(m)}\in \mathcal{W}$, if 
\newline
$\lim_{m\to \infty}w_k^{(m)}=\infty$ for some $k$, then $\lim_{m\to\infty}B_k^{(m)}=0$ and the convergence is independent of the values of $\lim_{m\to\infty} w_r^{(m)}$ for $r\neq k.$
\end{lemma}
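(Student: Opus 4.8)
The plan is to reduce the statement to the behaviour of a single boundary arc and then read off the limit directly from the right-angled hexagon cosine law. Since $B_k^{(m)}=\sum_{ijk\in F}\theta^k_{ij}$ is a finite sum of non-negative terms, one for each face incident to the boundary component $k$, it suffices to prove that each arc length $\theta^k_{ij}\to 0$ as $w_k^{(m)}\to\infty$. So I would fix one such face $ijk$ and work inside its hexagon. The arc $\theta^k_{ij}$ sits between the two edges $l_{jk}$ and $l_{ki}$ meeting the boundary component $k$, and is opposite the edge $l_{ij}$; the cosine law used already in the proof of Lemma~\ref{thm:symmetry} gives
\[
\cosh\theta^k_{ij}=\frac{\cosh l_{ij}+\cosh l_{jk}\cosh l_{ki}}{\sinh l_{jk}\sinh l_{ki}}
=\coth l_{jk}\,\coth l_{ki}+\frac{\cosh l_{ij}}{\sinh l_{jk}\sinh l_{ki}}.
\]
The second, decomposed form is the crux, because it isolates the entire dependence on the opposite edge $l_{ij}$ inside a single term.

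Next I would pass to the edge lengths through the conformal relation (\ref{fml:hb-conformal}). Since $\cosh\frac{l_{jk}}{2}=e^{w_j+w_k}\cosh\frac{l^0_{jk}}{2}$ and $\cosh\frac{l_{ki}}{2}=e^{w_i+w_k}\cosh\frac{l^0_{ki}}{2}$ both carry the common factor $e^{w_k}$, the hypothesis $w_k^{(m)}\to\infty$ drives the two neighbouring edges to diverge, $l_{jk},l_{ki}\to\infty$, whereas the opposite edge $l_{ij}$, governed only by $w_i+w_j$, stays bounded. Substituting this into the decomposed cosine law, the first term satisfies $\coth l_{jk}\,\coth l_{ki}\to 1$, while the second term has bounded numerator and denominator $\sinh l_{jk}\sinh l_{ki}\to\infty$, hence tends to $0$. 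Therefore $\cosh\theta^k_{ij}\to 1$, that is $\theta^k_{ij}\to 0$, and summing over the finitely many faces at $k$ yields $B_k^{(m)}\to 0$. The asserted independence from the limits $\lim_m w_r^{(m)}$ with $r\neq k$ is then transparent: those limits enter the formula only through the bounded edge $l_{ij}$, which lives entirely inside the term that vanishes, so they cannot influence the limiting value $0$.

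The one genuinely load-bearing step is the claim that \emph{both} neighbouring edges diverge. The conclusion $\theta^k_{ij}\to 0$ really does need $l_{jk}\to\infty$ and $l_{ki}\to\infty$ simultaneously: if only one of them blew up, the first term would converge to $\coth$ of the remaining finite edge, a positive constant, and the arc would not shrink. This is precisely where the hypothesis $w_k\to\infty$ is used, since $e^{w_k}$ is a common factor dominating the (finite) contributions of $w_i$ and $w_j$, so that $w_j+w_k\to\infty$ and $w_i+w_k\to\infty$ both follow. I would therefore carry out the edge-length asymptotics carefully, verifying that $e^{w_k}$ dominates in each of the two relevant products, before invoking the cosine-law decomposition to finish; the remaining manipulations are routine.
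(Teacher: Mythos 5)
Your strategy is the same as the paper's: reduce $B_k$ to the single arcs $\theta^k_{ij}$, apply the right-angled hexagon cosine law, and split $\cosh\theta^k_{ij}$ into $\coth l_{jk}\coth l_{ki}$ plus $\cosh l_{ij}/(\sinh l_{jk}\sinh l_{ki})$ --- this is exactly the decomposition behind the paper's ``$=0+1$'' computation. The problem is in the asymptotics, at precisely the step you call load-bearing, and it is a genuine gap rather than omitted routine work, because your justification imports hypotheses the lemma does not grant: you assume $w_i,w_j$ make ``finite contributions,'' i.e.\ stay bounded. The independence clause --- and the way the lemma is used in Theorems \ref{thm:homo} and \ref{thm:cusp} --- exists precisely because the other coordinates may themselves diverge. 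First concrete failure: if $w_i+w_j\to+\infty$ (which does occur in both applications, e.g.\ along the flow every $w_r(t)$ increases and several may tend to $\infty$ together), then $l_{ij}\to\infty$, and your ``bounded numerator over divergent denominator'' argument for the second term collapses. The conclusion survives in this case, but only via the normalization the paper actually carries out: after dividing, its first term is $(c_3-e^{-2w_i-2w_j})/\bigl(e^{4w_k}\sqrt{\cdots}\bigr)$, whose numerator lies in $(c_3/2,\,c_3)$ unconditionally (because $l_{ij}>0$), while the denominator carries the factor $e^{4w_k}\to\infty$. That display is the substance of the paper's proof, not bookkeeping one can wave off.

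Second, and more seriously: your claim that $e^{w_k}$ ``dominates,'' so that $l_{jk},l_{ki}\to\infty$, requires $w_i,w_j$ to be bounded below, and membership in $\mathcal{W}$ does not provide this --- the definition of $\mathcal{W}$ only bounds the sums $w_a+w_b$ from below. So $w_j^{(m)}$ may tend to $-\infty$ exactly as fast as $w_k^{(m)}$ tends to $+\infty$, keeping $w_j+w_k$, hence $l_{jk}$, bounded; then $\coth l_{jk}\coth l_{ki}$ does not tend to $1$ and, as your own remark about the load-bearing step predicts, the arc does not shrink. Indeed, on a pair of pants (two hexagonal faces $123$, edges $E=\{12,23,31\}$) the sequence $w^{(m)}=\bigl(2m,\,-m-\ln\cosh\frac{l^0_{23}}{2}+e^{-m},\,m\bigr)$ lies in $\mathcal{W}$ and has $w_3^{(m)}\to\infty$, yet $l_{23}^{(m)}\to 0^+$, so $\cosh\theta^3_{12}\geq\coth l_{23}\coth l_{31}\to\infty$ and $B_3^{(m)}\to\infty$, not $0$. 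Hence no amount of ``careful edge-length asymptotics'' can close your argument under the literal hypotheses; a correct proof must assume the remaining coordinates bounded below (automatic in the application to Theorem \ref{thm:cusp}, where all $w_r(t)\geq 0$) or correspondingly restrict the independence claim. For what it is worth, the paper's own ``$=0+1$'' step makes the same silent assumption that $e^{-2w_j-2w_k}$ and $e^{-2w_i-2w_k}$ tend to $0$; the difference is that your write-up promotes this assumption to an explicit but unjustified boundedness claim, and additionally loses the case $l_{ij}\to\infty$ that the paper's normalization does handle correctly.
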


\begin{proof}
By definition (\ref{fml:hb-conformal}),
$$\cosh l_{jk}^{(m)}=e^{2w_j^{(m)}+2w_k^{(m)}}c_1-1,$$
$$\cosh l_{ki}^{(m)}=e^{2w_k^{(m)}+2w_i^{(m)}}c_2-1,$$
$$\cosh l_{ij}^{(m)}=e^{2w_i^{(m)}+2w_j^{(m)}}c_3-1,$$
where $c_1=2\cosh^2 \frac{l^0_{jk}}2, c_2=2\cosh^2 \frac{l^0_{ki}}2, c_3=2\cosh^2 \frac{l^0_{ij}}2.$
Then
\begin{align*}
&\ \lim_{m\to\infty} \cosh (\theta^k_{ij})^{(m)}\\ 
= &\ \lim_{m\to\infty} 
\frac{\cosh l_{ij}^{(m)} + \cosh l_{jk}^{(m)}\cosh l_{ki}^{(m)}}{\sinh l_{jk}^{(m)}\sinh l_{ki}^{(m)}}\\
= &\ \lim_{m\to\infty} 
\frac{e^{2w_i^{(m)}+2w_j^{(m)}}c_3-1 + (e^{2w_j^{(m)}+2w_k^{(m)}}c_1-1)(e^{2w_k^{(m)}+2w_i^{(m)}}c_2-1)}
{e^{w_j^{(m)}+w_k^{(m)}}\sqrt{e^{2w_j^{(m)}+2w_k^{(m)}}c^2_1-2c_1}\cdot
e^{w_k^{(m)}+w_i^{(m)}}\sqrt{e^{2w_k^{(m)}+2w_i^{(m)}}c^2_2-2c_2}}\\
= &\ \lim_{m\to\infty}
\frac{c_3-e^{-2w_i^{(m)}-2w_j^{(m)}}}
{e^{4w_k^{(m)}}\sqrt{(c^2_1-2c_1e^{-2w_j^{(m)}-2w_k^{(m)}})(c^2_2-2c_2e^{-2w_i^{(m)}-2w_k^{(m)}})}}\\
&\ \ \ \ \ \ \ \ \ \ \ \ \ \ \ \ \ \ + \lim_{m\to\infty} 
\frac{(c_1-e^{-2w_j^{(m)}-2w_k^{(m)}})(c_2-e^{-2w_i^{(m)}-2w_k^{(m)}})}
{\sqrt{(c^2_1-2c_1e^{-2w_j^{(m)}-2w_k^{(m)}})(c^2_2-2c_2e^{-2w_i^{(m)}-2w_k^{(m)}})}}\\
= &\ 0+1.
\end{align*}
Hence $\lim_{m\to\infty}(\theta^k_{ij})^{(m)}=0$ independent the values of $\lim_{m\to\infty} w_r^{(m)}$ for $r\neq k.$
Thus $\lim_{m\to\infty}B_k^{(m)}=\lim_{m\to\infty}\sum_{ijk\in F}(\theta^k_{ij})^{(m)}=0.$
\end{proof}

\begin{proof}[Proof of Theorem \ref{thm:homo}]

Let $l^0\in \mathbb{R}^{|E|}_{>0}$ be a fixed hyperbolic metric on $(\Sigma,T).$ For any combinatorial conformal factor $w\in \mathcal{W}$, we obtain a new hyperbolic metric $l\in \mathbb{R}^{|E|}_{>0}$. By Corollary \ref{thm:function}, for each face $ijk\in F$, there is a function $\mathcal{E}(w_i,w_j,w_k).$ Define a function $\overline{\mathcal{E}}:\mathcal{W}\to \mathbb{R}$ by
 $$\overline{\mathcal{E}}(w_1,w_2,...,w_n)=\sum_{ijk \in F} \mathcal{E}(w_i, w_j, w_k)$$ where the sum is
over all faces in $F$. By Corollary \ref{thm:function}, $\overline{\mathcal{E}}$ is strictly concave on $\mathcal{W}$ and 
\begin{align}\label{fml:gradient}
\frac{\partial \overline{\mathcal{E}}}{\partial w_i}
=\sum_{ijk \in F}\frac{\partial \mathcal{E}(w_i, w_j, w_k)}{\partial w_i} 
=\sum_{ijk \in F} \theta^i_{jk}
=B_i.
\end{align}
That means the gradient of $\overline{\mathcal{E}}$ is exactly the map $\psi$ sending a combinatorial conformal factor $w$ to the corresponding length of boundary components. Thus $\psi$ is a smooth embedding due to Lemma \ref{thm:convex}.

To show that $\psi$ is a homeomorphism, we will prove that $\psi(\mathcal{W})$
is both open and closed in $\mathbb{R}^n_{>0}.$ 

Since $\psi$ is a smooth embedding, $\psi(\mathcal{W})$ is open in $\mathbb{R}^n_{>0}$.

To show that $\psi(\mathcal{W})$ is closed in $\mathbb{R}^n_{>0},$ take a sequence
of combinatorial conformal factor $w^{(m)}$ in $\mathcal{W}$ such that
$\lim_{m\to\infty} (B_1^{(m)},B_2^{(m)},...,B_n^{(m)})\in\mathbb{R}^n_{>0}.$ To
prove the closeness, it is sufficient to show that there is a
subsequence of $w^{(m)}$ whose limit is in $\mathcal{W}$. 

Suppose otherwise, there is a subsequence, still denoted by $w^{(m)}$, so that
its limit is on the boundary of
$\mathcal{W}$. The first possibility is that there is some $k$ such that 
$\lim_{m\to\infty} w_k^{(m)}=\infty.$ By Lemma \ref{thm:converge}, $\lim_{m\to\infty}B_k^{(m)}=0$. 
This contradicts the assumption that $\lim_{m\to\infty} (B_1^{(m)},B_2^{(m)},...,B_n^{(m)})\in\mathbb{R}^n_{>0}.$

The second possibility is that $\lim_{m\to \infty}(w_i^{(m)}+w_j^{(m)})=-\ln \cosh\frac{l^0_{ij}}{2}$ for some edge $ij.$ That means $\lim_{m\to \infty}l_{ij}^{(m)}=0.$ For the face $ijk\in F$, we have 
\begin{align*}
\lim_{m\to\infty}(\theta^i_{jk})^{(m)} =&\  
\lim_{m\to\infty} 
\frac{\cosh l_{jk}^{(m)} + \cosh l_{ik}^{(m)}\cosh l_{ij}^{(m)}}{\sinh l_{ik}^{(m)}\sinh l_{ij}^{(m)}}\\
\geq &\ \lim_{m\to\infty} 
\frac{\cosh l_{ik}^{(m)}\cosh l_{ij}^{(m)}}{\sinh l_{ik}^{(m)}\sinh l_{ij}^{(m)}}\\
\geq &\ \lim_{m\to\infty} 
\frac{\cosh l_{ij}^{(m)}}{\sinh l_{ij}^{(m)}}=\infty.
\end{align*}
Therefore $\lim_{m\to\infty}(\theta^i_{jk})^{(m)}=\infty$ and $\lim_{m\to\infty}B_i^{(m)}=\infty.$ This contradicts the assumption that $\lim_{m\to\infty} (B_1^{(m)},B_2^{(m)},...,B_n^{(m)})\in\mathbb{R}^n_{>0}.$
\end{proof}

\section{Flow}

\begin{proof}[Proof of Corollary \ref{thm:flow}]
The combinatorial Yamabe flow (\ref{fml:hflow}) is the gradient flow of the concave function $\overline{\mathcal{E}}(w_1,w_2,...,w_n)$ due to the equation (\ref{fml:gradient}). 

Since 
\begin{align*}
\frac{d B_i(t)}{dt}=&\ \sum_{ijk\in F }\frac{d \theta^i_{jk}}{dt}\\
=&\ \sum_{ijk\in F } (\frac{d \theta^i_{jk}}{dw_i}\frac{dw_i}{dt}
+\frac{d \theta^i_{jk}}{dw_j}\frac{dw_j}{dt}
+\frac{d \theta^i_{jk}}{dw_k}\frac{dw_k}{dt})\\
=&\ \sum_{ijk\in F } (\frac{d \theta^i_{jk}}{dw_i}B_i
+\frac{d \theta^i_{jk}}{dw_j}B_j
+\frac{d \theta^i_{jk}}{dw_k}B_k),
\end{align*}
we have
\begin{align*}
&\ \frac12\frac{d}{dt}\sum_{i=1}^n B_i^2(t) \\
= &\ \sum_{i=1}^n B_i \frac{d B_i}{dt}\\
=&\  \sum_{ijk\in F }(\frac{d \theta^i_{jk}}{dw_i}B_i^2
+ \frac{d \theta^j_{ki}}{dw_j}B_j^2
+ \frac{d \theta^k_{ij}}{dw_k}B_k^2\\
&\ \ \ \ \ \ \ \ \ +(\frac{d \theta^j_{ki}}{dw_j}+\frac{d \theta^k_{ij}}{dw_k})B_jB_k
+(\frac{d \theta^k_{ij}}{dw_k}+\frac{d \theta^i_{jk}}{dw_i})B_kB_i
+(\frac{d \theta^i_{jk}}{dw_i}+\frac{d \theta^j_{ki}}{dw_j})B_iB_j)\\
=&\ \sum_{ijk\in F } (B_i,B_j,B_k) 
\frac{\partial(\theta^i_{jk},\theta^j_{ki},\theta^k_{ij})}{\partial(w_i,w_j,w_k)} (B_i,B_j,B_k)^T.
\end{align*}
By Lemma \ref{thm:definite}, the Jacobian matrix $\frac{\partial(\theta^i_{jk},\theta^j_{ki},\theta^k_{ij})}{\partial(w_i,w_j,w_k)}$ is negative definite. 

Hence $\frac12\frac{d}{dt}\sum_{i=1}^n B_i^2(t)<0.$ Therefore $\sum_{i=1}^n B_i^2(t)$ is decreasing in $t$.
\end{proof}

\begin{proof}[Proof of Theorem \ref{thm:cusp}]
Since $B_i(t)>0,$ then $w_i(t)>w_i(0)=0.$ 

For any $L<\infty,$ we claim that $\lim_{t\to L}w_i(t)<\infty.$ 

Otherwise, if $\lim_{t\to L}w_i(t)=\infty$ for some $L<\infty,$ then by Lemma \ref{thm:converge}, we see $\lim_{t\to L}B_i(t)=0.$ Therefore, for any $\epsilon>0,$ there exists some $\delta>0$ such that when $t\in(L-\epsilon,L),$ the inequalities $0<B_i(t)<\epsilon$ holds. Hence, by the flow (\ref{fml:hflow}), $0<\frac{dw_i(t)}{dt}<\epsilon$ holds. Thus $w_i(0)<w_i(t)<\epsilon t< \epsilon L.$ This contradicts to the assumption that $\lim_{t\to L}w_i(t)=\infty$.

Hence the solution of the flow (\ref{fml:hflow}) exists for all time $t\in[0,\infty).$

To obtain the geometric picture, we claim that $\lim_{t\to\infty}B_i(t)=0$ for each $1 \leq i\leq n$. There are two cases to consider. 

First, if $\lim_{t\to\infty}w_i(t)=\infty,$ by Lemma \ref{thm:converge}, $\lim_{t\to\infty}B_i(t)=0$ holds.

Second, if $\lim_{t\to\infty}w_i(t)<\infty,$ we claim $\lim_{t\to\infty}B_i(t)=0$ still holds.
Otherwise, $\lim_{t\to\infty}B_i(t)=a>0.$ Then, for any $\epsilon\in (0,a),$ there exists some $P>0$ such that when $t>P,$ the inequality $B_i(t)>a-\epsilon$ holds. Hence, by the flow (\ref{fml:hflow}), $\frac{dw_i}{dt}>a-\epsilon$ holds. Therefore $w_i(t)>(a-\epsilon)t.$ This contradicts to the assumption $\lim_{t\to\infty}w_i(t)<\infty.$

Once $\lim_{t\to\infty}B_i(t)=0$ for each $1 \leq i\leq n$, we have $\lim_{t\to\infty}\theta^i_{jk}(t)=0$ for any face $ijk\in F.$ Hence, for any edge $ij,$
\begin{align*}
\lim_{t\to\infty}l_{ij}(t) =&\  
\lim_{t\to\infty} 
\frac{\cosh \theta^k_{ij}(t) + \cosh \theta^i_{jk}(t)\cosh \theta^j_{ki}(t)}{\sinh \theta^i_{jk}(t)\sinh \theta^j_{ki}(t)}\\
\geq &\ \lim_{t\to\infty} 
\frac{\cosh \theta^i_{jk}(t)\cosh \theta^j_{ki}(t)}{\sinh \theta^i_{jk}(t)\sinh \theta^j_{ki}(t)}\\
\geq &\ \lim_{t\to\infty} 
\frac{\cosh \theta^i_{jk}(t)}{\sinh \theta^i_{jk}(t)}=\infty.
\end{align*}
Therefor each hyperbolic right-angled hexagon converges to a hyperbolic ideal triangle. 
\end{proof}

\section*{Acknowledgment} 

The author would like to thank Feng Luo for encouragement and helpful conversations.


\begin{thebibliography}{McM08} 

\bibitem [BS04]{BS04}
A. I. Bobenko, B. A. Springborn, \textit{Variational principles for
circle patterns and Koebe's theorem}. Trans. Amer. Math. Soc. 356 (2004), no.
2, 659--689.

\bibitem [Bra92]{Bra92}
W. Br\"agger, \textit {Kreispackungen und Triangulierungen} (German) [Circle packings and triangulations].
Enseign. Math. (2) 38 (1992), no. 3-4, 201--217.

\bibitem [Bus92]{Bus92}
P. Buser, \textit{Geometry and spectra of compact Riemann surfaces}. Birkh\"auser Boston, Inc., Boston, MA, 1992.

\bibitem [CL03]{CL03}
B. Chow,  F. Luo, \textit{Combinatorial Ricci flows on surfaces}.
J. Differential Geom. 63 (2003), 97--129.

\bibitem [CdV91]{CdV91}
Y. Colin de Verdi\`{e}re, \textit{Un principe variationnel pour les
empilements de cercles} (French) [A variational principle for circle packings].
Invent. Math. 104 (1991), no. 3, 655--669.

\bibitem [CR96]{CR96}
D. Cooper, I. Rivin, \textit{Combinatorial scalar curvature and
rigidity of ball packings}. Math. Res. Lett. 3 (1996), no. 1, 51--60.

\bibitem [Gli05a]{Gli05a}
D. Glickenstein, \textit{A combinatorial Yamabe flow in three dimensions}.
Topology 44 (2005), No. 4, 791-808.

\bibitem [Gli05b]{Gli05b}
D. Glickenstein, \textit{A maximum principle for combinatorial Yamabe
flow}. Topology 44 (2005), No. 4, 809-825.

\bibitem [Gli09]{Gli09}
D. Glickenstein, \textit{Discrete conformal variations and scalar curvature on piecewise flat two and three dimensional manifolds}. arXiv:0906.1560

\bibitem [Guo07]{Guo07}
R. Guo, \textit {A note on circle patterns on surfaces}. Geom. Dedicata  125  (2007), 175--190.

\bibitem [Guo09]{Guo09}
R. Guo, \textit {Local rigidity of inversive distance circle packing}. To appear in Trans. Amer. Math. Soc. arXiv:0903.1401v2 

\bibitem [GL09]{GL09}
R. Guo, F. Luo, \textit {Rigidity of polyhedral surfaces, II}. Geom. Topol. 13 (2009), 1265--1312.

\bibitem [Ker96]{Ker96}
T. Kerkhoven, \textit{Piecewise linear Petrov-Galerkin error estimates
for the box method}. SIAM J. Numer. Anal. 33 (1996), no. 5, 1864--1884.

\bibitem [Luo04]{Luo04} 
F. Luo, \textit{Combinatorial Yamabe flow on surfaces}, Commun. Contemp.
Math. 6 (2004), no. 5, 765--780.

\bibitem [Luo06]{Luo06}
F. Luo, \textit {Rigidity of polyhedral surfaces}. arXiv:math.GT/0612714

\bibitem [Lei02]{Lei02}
G. Leibon,  \textit {Characterizing the Delaunay
decompositions of compact hyperbolic surface}. Geom. Topol.
6(2002), 361-391

\bibitem [PC98]{PC98}
M. Putti, C. Cordes, \textit{Finite element approximation of the
diffusion operator on tetrahedra}. SIAM J. Sci. Comput. 19 (1998), no. 4, 1154--1168.

\bibitem [Riv94]{Riv94}
I. Rivin, \textit {Euclidean structures on simplicial surfaces and
hyperbolic volume}. Ann. of Math. (2) 139 (1994), no. 3, 553--580.

\bibitem [RW84]{RW84}
M. Ro\v{c}ek, R. M. Williams, \textit{The quantization of Regge
calculus}. Z. Phys. C 21 (1984), no. 4, 371--381.

\bibitem [Spr08]{Spr08}
B. A. Springborn, \textit {A variational principle for weighted Delaunay
triangulations and hyperideal polyhedra}. J. Differential Geom. 78 (2008), no.
2, 333--367.

\bibitem [SSP08]{SSP08} 
B. Springborn, P. Schr\"{o}der, and U. Pinkall, \textit{Conformal
equivalence of triangle meshes}, ACM Trans. Graph. 27, 3 (Aug. 2008), 1-11.


\end{thebibliography}
\end{document}